\theoremstyle{plain}
\newtheorem{theorem}{Theorem}[section]
\newtheorem{lemma}[theorem]{Lemma}
\newtheorem{corollary}[theorem]{Corollary}
\theoremstyle{definition}
\theoremstyle{remark}
\newtheorem{remark}[theorem]{Remark}
\def\po{\mathcal{P}(G)}
\def\del{\Delta(G)}
\def\sg{\mathcal{S}(G)}
\def\ss{\mathcal{S}^*(G)}
\def\sss{\mathcal{S}^{**}(G)}
\begin{document}
\title[Line graph characterization of the order supergraph of a finite group]{Line graph characterization of the order supergraph of a finite group}



\author[Manisha, Parveen, Jitender Kumar]{Manisha, Parveen, $\text{JITENDER KUMAR}^{^*}$}
\address{$\text{}^1$Department of Mathematics, Birla Institute of Technology and Science Pilani, Pilani-333031, India}
\email{yadavmanisha2611@gmail.com,p.parveenkumar144@gmail.com,jitenderarora09@gmail.com}

\begin{abstract}
The power graph ${\mathcal{P}(G)}$ is the simple undirected graph with group elements as a vertex set and two elements are adjacent if one of them is a power of the other. The order supergraph $\sg$ of the power graph $\po$ is the simple undirected graph with vertex set $G$ in which two vertices $x$ and  $y$ are adjacent if $o(x)\vert o(y)$ or $o(y)\vert o(x)$. In this paper, we classify all the finite groups $G$ such that the order supergraph $\sg$ is the line graph of some graph. Moreover, we characterize finite groups whose order supergraphs are the complement of line graphs.
\end{abstract}
\subjclass[2020]{05C25, 20D15}

\keywords{Power graph, order supergraph of power graph, line graph, finite groups. \\ * Corresponding Author}

\maketitle

\section{Historical background}
The study of inter relationships between graphs and algebraic structures, viz: groups, rings, vector spaces etc; is a broad research area of algebraic graph theory. Various researcher studied the graphs associated with groups as they have enormous applications in the area of mathematics and automata theory (see \cite{kelarev2003graph, kelarev2004labelled,a.kelarev2009cayley}). In literature, there are various graphs associated with groups, e.g. Cayley graphs, commuting graphs, power graphs, prime graphs etc.. The concept of the directed power graph was introduced in \cite{a.kelarev2000groups}. The \emph{power graph} $\mathcal{P}(G)$ of a group $G$ is the simple undirected graph whose vertex set  is the corresponding set of $G$ and two vertices $a$ and $b$ are adjacent if one is a power of the other or equivalently: either $a \in  \langle b\rangle$ or $b \in \langle a\rangle$. In \cite{a.Cameron2010}, the author determined that the power graphs of two finite groups are isomorphic if they have the same number of elements of each order. Later, Cameron \emph{et. al} \cite{a.Cameron2011} proved that for finite abelian groups $G_1$ and $G_2$, $\mathcal{P}(G_1) \cong \mathcal{P}(G_2) $ if and only if $G_1 \cong G_2$. A graph $\Gamma$ is said to be $\Gamma'$-free if it contains no induced subgraph, which is isomorphic to $\Gamma'$. In  \cite{a.doostabadiforbidden}, Doostabadi \emph{et. al} classified all the finite groups with $K_{1,3}$-free, $K_{1,4}$-free or $C_4$-free power graphs. In \cite{a.MannaForbidden2021}, the authors studied certain forbidden subgraphs such as split, threshold, chordal and cograph of the power graphs of finite groups. For a detailed list of results and open problems on power graphs, we refer the reader to \cite{a.powergraphsurvey} and references therein.



The order supergraph $\sg$ of the power graph is the simple undirected graph whose vertex set is $G$ and two vertices $x, y \in G$ are adjacent if $o(x)\vert o(y)$ or $o(y)\vert o(x)$. The power graph $\po$ of a finite group $G$ is a spanning subgraph of $\sg$. Hamzeh and Ashrafi \cite{a.hamzeh2017automorphism} studied the automorphism groups of order supergraphs of certain finite groups. Further, in \cite{a.Hamzeh2018order}  they studied the relation between certain properties of the power graph and the order supergraph. Some essential properties of $\sg$ including Hamiltonianity, Eulerianness and $2$-connectedness have been studied in \cite{a.Hamzeh2019someremarks}. Ma \emph{et al.} \cite{a.Ma2022order} studied the independence number of the order supergraph. Also, they obtained a necessary and sufficient condition for the independence number of $\sg$ to be equal to the number of distinct prime divisors of the order of $G$. Asboi \emph{et al.} \cite{a.Asboei2022themainsupergraph} showed that a group is isomorphic to some simple groups, namely sporadic simple groups, PSL$(2,p)$, PGL$(2,p)$ if and only if their corresponding order supergraphs are isomorphic.

The \emph{line graph} $L(\Gamma)$ of the graph $\Gamma$ is the graph whose vertex set is all the edges of  $\Gamma$ and two vertices of $L(\Gamma)$ are adjacent if they are incident in $\Gamma$.
 Bera \cite{a.bera2022} characterized all the finite nilpotent groups whose power graphs and proper power graphs are line graphs. In \cite{a.kumar2023finite}, the authors have been classified all the finite groups whose enhanced power graphs are line graphs. Moreover, all finite nilpotent groups whose proper enhanced power graphs are line graphs of some graphs are determined in \cite{a.kumar2023finite}. In this paper, we aim to study the line graphs of order super power graphs associated to finite groups. The graphs $\ss$ and $\sss$ are the subgraphs of $\sg$ obtained by deleting the identity element of $G$ and all the dominating vertices of $\sg$, respectively. We characterize all the finite group $G$ such that $\del \in \{\sg, \ss, \sss \} $ is a line graph of some graph. Also, we classify all finite groups $G$ such that $\del \in \{\sg, \ss, \sss \} $ is the complement of a line graph.


\section{Preliminaries}
A \emph{graph} $\Gamma$ consists of an ordered pair of the vertex set $V(\Gamma)$ and the edge set $E(\Gamma)\subseteq V(\Gamma)\times V(\Gamma)$, in which two vertices $u$ and $v$ are adjacent if $\{u, v\}\in E(\Gamma)$. If $u$ is adjacent to $v$, then we denote it by $u \sim v$. Otherwise, $u \nsim v$.
If $\{u,v\} \in E(\Gamma)$, then the vertices $u$ and $v$ are called \emph{endpoints} of the edge $\{u,v\}$. Two edges $e_1$ and $e_2$ are called \emph{incident edges} if they have a common endpoint. An edge $e$ is called a \emph{loop} if both the endpoints of $e$ are the same. A graph is called a \emph{simple graph} if it does not contain any loop or multiple edges. We consider only simple graphs throughout the paper. 
 A graph $\Gamma$ is said to be an $\emph{empty graph}$ if the vertex set $V(\Gamma)$ is empty. A $\emph{subgraph}  \ \Gamma'$ of a graph $\Gamma$ is a graph  such that $V(\Gamma')\subseteq V(\Gamma)$ and $E(\Gamma')\subseteq E(\Gamma)$. 
A \emph{spanning subgraph} $\Gamma'$ of a graph $\Gamma$ is a subgraph of $\Gamma$ such that $V(\Gamma')=V(\Gamma)$. Let $X\subseteq V(\Gamma)$. Then the subgraph $\Gamma'$ \emph{induced by the set} $X$ is a graph such that $V(\Gamma')=X$ and $u,v\in X$ are adjacent if and only if they are adjacent in $\Gamma$.
If a vertex $u$ of a graph $\Gamma$ is adjacent to all other vertices of $\Gamma$ then $u$ is called a \emph{dominating vertex} of $\Gamma$. By $\mathrm{Dom}(\Gamma)$, we mean the set of all dominating vertices of $\Gamma$. A graph $\Gamma$ is said to be \emph{complete} if each pair of distinct vertices is adjacent. The complete graph of $n$ vertices is denoted by $K_n$. A graph $\Gamma$ is called a \emph{bipartite graph} if the vertex set $V(\Gamma)$ is partitioned into two subsets $V_1$ and $V_2$ such that every edge of $\Gamma$ has an endpoint in $V_1$ and one endpoint in $V_2$. A bipartite graph is said to be a \emph{complete bipartite} graph if each vertex of one partition is adjacent to every vertex of the other partition set. We denote by $K_{m,n}$ a complete bipartite graph with partition sizes $m$ and $n$. The complete bipartite graph $K_{1,n}$ is called the \emph{star graph}. The \emph{complement} of a graph $\Gamma$ is the graph $\overline{\Gamma}$ such that $V(\Gamma)= V(\overline{\Gamma})$ and two vertices $u$ and $v$ are adjacent in $\overline{\Gamma}$ if and only if $u$ is not adjacent to $v$ in $\Gamma$. A \emph{path} from $u$ to $v$ in a graph $\Gamma$ is a sequence of $r+1$ distinct vertices starting with $u$ and ending with $v$ such that consecutive vertices are adjacent. A graph $\Gamma$ is said to be \emph{connected} if there is a path between any pair of vertices of $\Gamma$. If a graph $\Gamma$ is equal to a path, then $\Gamma$ is called a \emph{path graph}. By $P_n$ we mean the path graph of $n$ vertices. Let $\Gamma_1,\ldots , \Gamma_m$ be $m$ graphs such that $V(\Gamma_i)\cap V(\Gamma_j)= \varnothing$, for distinct $i, j$. Then $\Gamma =\Gamma_1 \cup \cdots \cup \Gamma_m$ is a graph with vertex set  $V(\Gamma_1) \cup \cdots \cup V(\Gamma_m)$ and edge set $E(\Gamma_1) \cup \cdots \cup E(\Gamma_m)$. Let $\Gamma_1$ and $\Gamma _2$ be two graphs with disjoint vertex set, the \emph{join} $\Gamma_1 \vee \Gamma_2$ of $\Gamma_1$ and $\Gamma_2$ is the graph obtained from the union of $\Gamma_1$ and $\Gamma_2$ by adding new edges from each vertex of $\Gamma_1$ to every vertex of $\Gamma_2$.  Two graphs $\Gamma_1$ and $\Gamma _2$ are \emph{isomorphic} if there is a bijection $f$ from $V(\Gamma _1)$ to $V(\Gamma _2)$ such that if $u\sim v$ in $\Gamma _1$ if and only if $f(u)\sim f(v)$ in $\Gamma_2$.

The following two characterization of the line graph and the complement of the line graph are useful in the sequel.
\begin{lemma}{\rm \cite{a.beineke1970}}{\label{induced lemma}}
A graph $\Gamma$ is the line graph of some graph if and only if none of the nine graphs in $\mathrm{Figure \; \ref{fig line graph}}$  is an induced subgraph of $\Gamma$.
\end{lemma}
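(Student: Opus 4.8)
The plan is to route the proof through \emph{Krausz's clique-partition criterion}: a graph $\Gamma$ is a line graph if and only if $E(\Gamma)$ can be partitioned into complete subgraphs (each edge lying in exactly one class) with every vertex lying in at most two of them. I would treat this reformulation as the hinge of the argument, first recording the easy reconstruction it supplies. From such a partition $\{C_1,\dots,C_k\}$ one builds a root graph $H$ whose vertices are the cliques $C_i$ together with one pendant vertex for each $v$ lying in a single clique, and whose edges are the vertices of $\Gamma$ (a vertex in cliques $C_i,C_j$ becomes the edge $\{C_i,C_j\}$), so that $L(H)\cong\Gamma$; conversely, the edge-stars at the vertices of any $H$ furnish such a partition of $L(H)$.

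For the \textbf{necessity} direction I would first observe that the class of line graphs is closed under taking induced subgraphs: if $\Gamma = L(H)$ and $W\subseteq V(\Gamma)=E(H)$, then the subgraph induced on $W$ equals $L(H_W)$, where $H_W$ is the subgraph of $H$ spanned by the edges in $W$. Hence it suffices to check that each of the nine graphs in Figure~\ref{fig line graph} fails to be a line graph, and since each is a fixed small graph this is a finite verification. For instance, in the claw $K_{1,3}$ the three mutually non-adjacent leaves force three distinct cliques meeting at the centre, contradicting the two-clique bound; the remaining eight graphs are dispatched by equally short attempts to build a Krausz partition that are forced to fail.

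The \textbf{sufficiency} direction carries the real content. Assuming $\Gamma$ contains none of the nine graphs as an induced subgraph, I would construct a Krausz partition locally and then glue. Claw-freeness ($K_{1,3}$-freeness) guarantees that the neighbourhood $N(v)$ of every vertex $v$ has independence number at most two, so $N(v)$ is covered by at most two cliques; the role of the other eight forbidden graphs is to pin this local decomposition down and make it canonical. I would then take the maximal cliques of $\Gamma$ as the intended partition classes and show, edge by edge, that the hypotheses force every edge of $\Gamma$ into a unique maximal clique of the family and every vertex into at most two of them, so that Krausz's criterion applies and the reconstruction above produces the desired root graph.

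I expect the gluing in the sufficiency direction to be the main obstacle. The delicate point is that two maximal cliques can overlap only in a very restricted way, and proving that the forbidden list is \emph{exactly} strong enough to rule out every problematic overlap pattern is an intricate but finite case analysis on the intersections of maximal cliques meeting at a common vertex or edge. Once this local rigidity is established, the global edge-partition and the recovery of $H$ follow routinely.
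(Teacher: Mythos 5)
First, note that the paper does not prove this lemma at all: it is Beineke's classical characterization of line graphs, quoted with a citation to Beineke (1970), so there is no in-paper proof to compare against; your proposal is an attempt at the classical theorem itself. Your necessity half is sound: heredity of line graphs under induced subgraphs (the subgraph of $L(H)$ induced on $W\subseteq E(H)$ is $L(H_W)$) plus a finite check that each of the nine graphs admits no Krausz partition is exactly how that direction goes, and your claw argument is correct. Your two-way derivation of Krausz's criterion (edge-stars of the root graph in one direction, the clique-to-vertex reconstruction in the other) is also fine.

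The genuine gap is in the sufficiency direction, which is the entire content of the theorem. Your plan is to take \emph{all} maximal cliques of $\Gamma$ as the Krausz classes and to show that the forbidden-subgraph hypotheses force every edge of $\Gamma$ into a \emph{unique} maximal clique of this family. That statement is false: the diamond ($K_4$ minus an edge) contains none of the nine forbidden graphs (it is itself a line graph, namely of the triangle with a pendant edge), yet its central edge lies in two maximal cliques, the two triangles. So the partition classes cannot simply be the maximal cliques; any correct proof must select or truncate them, and this is precisely where the classical arguments do all their work --- Beineke's original proof, and the route through the van Rooij--Wilf odd-triangle condition (two odd triangles sharing an edge must induce $K_4$), both hinge on distinguishing which triangles may serve as classes. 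Your proposal acknowledges that ``two maximal cliques can overlap only in a very restricted way'' and defers the rest to ``an intricate but finite case analysis,'' but that analysis ranges over local configurations in an arbitrary graph, not over a finite list of graphs, and it is the theorem; asserting that it will succeed is not a proof. As it stands, the proposal is a correct strategy outline whose crucial step is both unexecuted and, in the specific form stated (uniqueness of the maximal clique containing each edge), incorrect.
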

   \begin{figure}[ht]
    \centering
    \includegraphics[scale=.9]{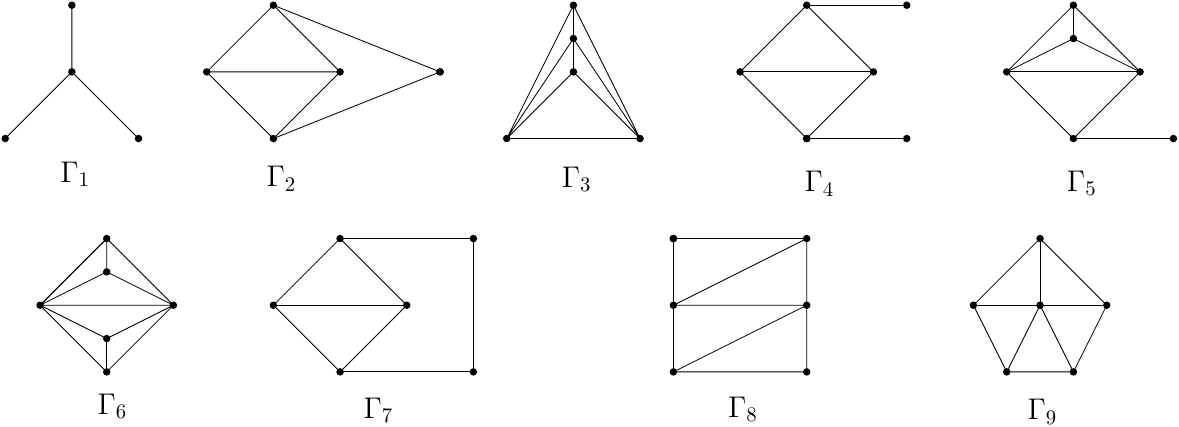}
    \caption{Forbidden induced subgraphs of line graphs.}
    \label{fig line graph}
\end{figure}

\begin{lemma}{\rm \cite[Theorem 3.1]{a.barati2021}}{\label{complement induced lemma}}
A graph $\Gamma$ is the complement of a line graph if and only if none of the nine graphs $\overline{\Gamma_i}$ in $\mathrm{Figure \; \ref{fig complement_line_graph}}$ is an induced subgraph of $\Gamma$.
   \begin{figure}[ht]
    \centering
    \includegraphics[scale=.9]{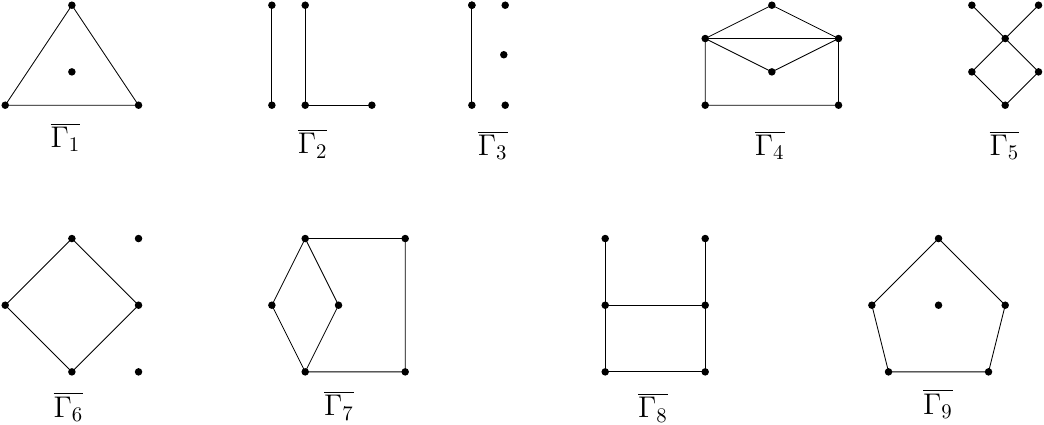}
    \caption{Forbidden induced subgraphs of the complement of line graphs.}
    \label{fig complement_line_graph}
\end{figure}
\end{lemma}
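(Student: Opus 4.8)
The plan is to derive this characterization directly from the Beineke criterion of Lemma~\ref{induced lemma} by a single complementation step, rather than reproving a Beineke-type structure theorem from scratch. The starting observation is the definitional equivalence that a graph $\Gamma$ is the complement of a line graph if and only if its complement $\overline{\Gamma}$ is itself the line graph of some graph. This immediately reduces the statement to an application of Lemma~\ref{induced lemma} to $\overline{\Gamma}$: the graph $\overline{\Gamma}$ is a line graph precisely when none of the nine Beineke graphs $\Gamma_i$ of Figure~\ref{fig line graph} occurs as an induced subgraph of $\overline{\Gamma}$.

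The key step is then to translate the ``induced subgraph of $\overline{\Gamma}$'' condition back into a condition on $\Gamma$. For any fixed vertex subset $X$, the subgraph of $\overline{\Gamma}$ induced by $X$ is exactly the complement of the subgraph of $\Gamma$ induced by $X$, because two vertices of $X$ are adjacent in one precisely when they are non-adjacent in the other. Consequently, a graph $H$ is an induced subgraph of $\overline{\Gamma}$ if and only if its complement $\overline{H}$ is an induced subgraph of $\Gamma$. Applying this with $H = \Gamma_i$ shows that $\Gamma_i$ is an induced subgraph of $\overline{\Gamma}$ if and only if $\overline{\Gamma_i}$ is an induced subgraph of $\Gamma$. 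Combining this with the previous paragraph, $\Gamma$ is the complement of a line graph if and only if none of the graphs $\overline{\Gamma_i}$ is an induced subgraph of $\Gamma$, which is exactly the desired statement.

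I expect the only point requiring care to be purely bookkeeping: one must confirm that the nine forbidden graphs displayed in Figure~\ref{fig complement_line_graph} are indeed the complements $\overline{\Gamma_i}$ of the nine Beineke graphs in Figure~\ref{fig line graph}, so that the two figures are consistent with the complementation argument. There is no genuine combinatorial obstacle here; the entire content is the complementation correspondence between induced subgraphs on a fixed vertex set, and the result is essentially a reformulation of Lemma~\ref{induced lemma}.
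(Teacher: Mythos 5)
Your proposal is correct, and in fact there is nothing in the paper to compare it against: the paper does not prove this lemma but simply quotes it as \cite[Theorem 3.1]{a.barati2021}, just as it quotes Beineke's criterion (Lemma~\ref{induced lemma}). Your argument supplies the standard, self-contained derivation of the cited result: (1) $\Gamma$ is the complement of a line graph if and only if $\overline{\Gamma}$ is a line graph, since complementation is an involution; (2) Beineke's criterion applies to $\overline{\Gamma}$; (3) for any vertex set $X$, the subgraph of $\overline{\Gamma}$ induced by $X$ is the complement of the subgraph of $\Gamma$ induced by $X$, so $\Gamma_i$ embeds as an induced subgraph of $\overline{\Gamma}$ exactly when $\overline{\Gamma_i}$ embeds as an induced subgraph of $\Gamma$. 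All three steps are sound, and the composite gives precisely the stated equivalence. Your closing caveat is also the right one, and it is discharged by the statement itself: the notation $\overline{\Gamma_i}$ in the lemma (and in Figure~\ref{fig complement_line_graph}) asserts that the nine displayed graphs are the complements of the nine Beineke graphs of Figure~\ref{fig line graph}, so the bookkeeping is consistent by definition. The net effect of your approach is that Lemma~\ref{complement induced lemma} needs no independent citation: it is a one-paragraph corollary of Lemma~\ref{induced lemma}, which is arguably a cleaner way to present it than invoking an external reference as the paper does.
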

Let $G$ be a finite group. The order of an element $x\in G$ is denoted by $o(x)$. For a positive integer $n$, $\phi(n)$ denotes the Euler's totient function of $n$. An EPPO-group is a finite group in which the order of every element is a power of a prime. The Dihedral group $D_{2n}$ is a regular $n$-gon with $n$-rotational symmetries and $n$-reflectional symmetries mathematically represented as $D_{2n} = \langle x,y: x^{n} = e,y^{2}=e$ and $yxy^{-1}=x^{-1} \rangle$. The semi-dihedral group $SD_{8n} = \langle a,b : a^{4m}=b^{2}=e, ab= ba^{2m-1} \rangle$. The generalized quaternion group $Q_{4n} = \langle a, b: a^{n} = b^{2}, a^{2n} = 1, b^{1}ab = a^{-1} \rangle$. The exponent of $G$ is defined as the least common multiple of the orders of all the elements of $G$ and it is denoted by $\mathrm{exp}(G)$. Throughout this paper, $G$ is a finite group and $e$ is the identity element of $G$.
The following results are useful for later use.


\begin{theorem}{\rm \cite{b.dummit1991abstract}}{\label{nilpotent}}
 Let $G$ be a finite group. Then the following statements are equivalent:
 \begin{enumerate}
     \item[(i)] $G$ is a nilpotent group.
     \item[(ii)] Every Sylow subgroup of $G$ is normal.
    \item[(iii)] $G\cong P_1\times P_2 \times \cdots \times P_r$, where $P_i$'s are Sylow $p_i$-subgroups of $G$.
    \item[(iv)] For $x,y\in G, \  x$ and $y$ commute whenever $o(x)$ and $o(y)$ are relatively primes.
 \end{enumerate}
 \end{theorem}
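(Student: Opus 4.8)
The plan is to prove the four conditions equivalent through a closed cycle of implications, first handling the classical trio $(i)\Leftrightarrow(ii)\Leftrightarrow(iii)$ and then splicing in $(iv)$; throughout, $P$ denotes a Sylow $p$-subgroup of $G$, and I would freely use the Sylow theorems and the Frattini argument. For $(i)\Rightarrow(ii)$ I would invoke the normalizer condition enjoyed by nilpotent groups, namely that every proper subgroup $H<G$ satisfies $H<N_G(H)$ (provable by induction on the nilpotency class). Applying this to $N_G(P)$ and noting that the Frattini argument forces $N_G(N_G(P))=N_G(P)$, a proper $N_G(P)$ would give $N_G(P)<N_G(N_G(P))=N_G(P)$, a contradiction; hence $N_G(P)=G$ and $P\trianglelefteq G$. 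For $(ii)\Rightarrow(iii)$, normality makes each Sylow subgroup unique, distinct primes give $P_i\cap P_j=\{e\}$, the product $P_1\cdots P_r$ is a subgroup of order $\prod_i|P_i|=|G|$, and coprimality of orders yields $P_i\cap\prod_{j\neq i}P_j=\{e\}$, so $G$ is the internal direct product $P_1\times\cdots\times P_r$. For $(iii)\Rightarrow(i)$, each $P_i$ is a $p$-group and hence nilpotent, and a finite direct product of nilpotent groups is nilpotent.

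It then remains to integrate $(iv)$. The implication $(iii)\Rightarrow(iv)$ is a coordinate computation: writing $x=(x_1,\dots,x_r)$ and $y=(y_1,\dots,y_r)$ in $P_1\times\cdots\times P_r$, one has $o(x)=\prod_i o(x_i)$, so if $\gcd(o(x),o(y))=1$ then in each coordinate at least one of $x_i,y_i$ is trivial and $x,y$ commute coordinatewise. The substantive step, which I expect to be the main obstacle, is $(iv)\Rightarrow(ii)$. The plan there is to show that under $(iv)$ the set $A_p$ of all $p$-elements of $G$ is a subgroup; since $A_p$ consists entirely of $p$-elements it is then a $p$-group containing every Sylow $p$-subgroup, hence $A_p$ is the unique Sylow $p$-subgroup, and being closed under conjugation it is normal, which gives $(ii)$.

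To prove $A_p$ is closed under multiplication I would take $x,y\in A_p$ and set $H=\langle x,y\rangle$, which again satisfies $(iv)$. Every $p'$-element $w\in H$ has order coprime to $o(x)$ and $o(y)$, so $(iv)$ forces $w$ to commute with the generators $x,y$, whence $w\in Z(H)$; thus all $p'$-elements of $H$ are central and form a central, hence normal, subgroup $Z_{p'}$ of $H$. Decomposing each element of $H$ into its $p$-part and $p'$-part shows that $H/Z_{p'}$ has only $p$-elements and is therefore a $p$-group, so $Z_{p'}$ is a central Hall $p'$-subgroup and $H=P_H\times Z_{p'}$ for a Sylow $p$-subgroup $P_H$ of $H$. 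Since $x$ and $y$ are $p$-elements their $Z_{p'}$-components vanish, so $x,y\in P_H$ and therefore $xy\in P_H\subseteq A_p$. The delicate point to get right will be verifying that $Z_{p'}$ is genuinely a subgroup coinciding with the full set of $p'$-elements of $H$ and that it splits off as a direct factor; once this local structure of $H$ is pinned down the remaining verifications are routine, and closing $(i)\Rightarrow(ii)\Rightarrow(iii)\Rightarrow(i)$ together with $(iii)\Rightarrow(iv)\Rightarrow(ii)$ completes the equivalence.
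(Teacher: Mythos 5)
Your proposal is correct, but note that the paper itself offers no proof of this theorem at all: it is quoted as a known result with a citation to Dummit and Foote, so there is no in-paper argument to compare against. Your cycle $(i)\Rightarrow(ii)\Rightarrow(iii)\Rightarrow(i)$ is exactly the standard textbook route (normalizer condition plus self-normalizing Sylow normalizers; recognition of internal direct products; products of nilpotent groups are nilpotent), and your treatment of $(iv)$ is sound: $(iii)\Rightarrow(iv)$ by the coordinatewise computation, and $(iv)\Rightarrow(ii)$ by showing the set $A_p$ of $p$-elements is a subgroup. That last step is the only genuinely non-routine part, and your argument for it works: for $x,y\in A_p$ and $H=\langle x,y\rangle$, every $p'$-element of $H$ commutes with both generators by $(iv)$, hence is central in $H$; the central $p'$-elements are closed under products (the order of a product of commuting elements divides the lcm of their orders), so they form a central subgroup $Z_{p'}$; every coset of $H/Z_{p'}$ is represented by the $p$-part of any of its elements, so the quotient is a $p$-group by Cauchy, making $Z_{p'}$ a central Hall $p'$-subgroup with $H=P_H\times Z_{p'}$, and $x,y,xy\in P_H\subseteq A_p$. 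Then $A_p$ is a normal $p$-subgroup containing every Sylow $p$-subgroup, forcing Sylow uniqueness and normality. Two small points of hygiene: the fact $N_G(N_G(P))=N_G(P)$ is not really the Frattini argument but a direct consequence of $P$ being the unique (normal) Sylow $p$-subgroup of $N_G(P)$ together with Sylow conjugacy --- the conclusion you use is nevertheless true; and in $(iv)\Rightarrow(ii)$ you do not even need $H$ to ``inherit'' $(iv)$, since you only ever apply $(iv)$ inside $G$ to pairs of elements of $H$. Neither point affects correctness.
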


 \begin{theorem}{\rm \cite[Theorem 2.3]{a.Hamzeh2018order}}\label{complete p group}
    Let $G$ be a finite group. Then the order supergraph $\sg$ is complete if and only if $G$ is a $p$-group.
 \end{theorem}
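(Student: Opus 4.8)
The statement is a biconditional, so the plan is to prove the two implications separately, in each case arguing directly from the adjacency rule of $\sg$: that $x\sim y$ holds exactly when $o(x)\mid o(y)$ or $o(y)\mid o(x)$.

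For the easier direction I would assume $G$ is a $p$-group and show $\sg$ is complete. Every element of a $p$-group has order a power of $p$, so for distinct $x,y\in G$ I may write $o(x)=p^a$ and $o(y)=p^b$. Since the powers of a fixed prime are totally ordered by divisibility, one of $p^a\mid p^b$ or $p^b\mid p^a$ always holds; hence $x\sim y$. As this applies to every pair of distinct vertices, $\sg$ is complete.

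For the converse I would argue by contrapositive, assuming $G$ is \emph{not} a $p$-group and producing a non-adjacent pair. If $G$ is not a $p$-group, then $|G|$ has at least two distinct prime divisors $p$ and $q$. By Cauchy's theorem there exist $x,y\in G$ with $o(x)=p$ and $o(y)=q$. Because $p$ and $q$ are distinct primes, neither divides the other, so $o(x)\nmid o(y)$ and $o(y)\nmid o(x)$; thus $x\nsim y$, and $\sg$ fails to be complete.

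The argument is short, and I do not anticipate a genuine obstacle; the only place requiring a named external result is the existence of elements of prescribed prime order, for which Cauchy's theorem is exactly what is needed and applies to every finite group. One could alternatively phrase the converse via Sylow theory or the decomposition in Theorem \ref{nilpotent}, but that would introduce a nilpotency hypothesis unnecessarily, whereas Cauchy's theorem gives the cleanest and most general route.
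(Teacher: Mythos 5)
Your proof is correct and complete: the forward direction follows from the total ordering of $p$-power orders under divisibility, and the contrapositive of the converse is exactly handled by Cauchy's theorem. Note that this paper does not actually prove the statement --- it is imported verbatim from \cite[Theorem 2.3]{a.Hamzeh2018order} --- so there is no internal proof to compare against; your argument is the standard one and is essentially what the cited source does, so nothing further is needed.
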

\section{Line graph characterization of $\sg$}
In this section, we classify all the groups $G$ such that $\sg$ is a line graph (see Theorem \ref{line SG graph}). Further, we determine all the group classes such that $\ss$ (see Theorem \ref{line S*G graph}) and $\sss$ (see Theorem \ref{line S**G graph}) are line graphs. Finally, we characterize all the groups $G$ such that $\sg$, $\ss$ and $\sss$ are the complement of the line graph of some graph (see Theorem \ref{line complement SG graph}).
The set $\{1,2,\ldots , k\}$ is denoted by $[k]$. 

\begin{theorem}{\label{line SG graph}}
The order supergraph $\sg$ is a line graph of some graph $\Gamma$ if and only if $G$ is an EPPO-group and the order of $G$ is divisible by at most two distinct primes.
\end{theorem}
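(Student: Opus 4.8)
The plan is to use Beineke's forbidden-subgraph criterion (Lemma \ref{induced lemma}) and to prove the two implications separately. For sufficiency I would display $\sg$ explicitly as a line graph, and for necessity I would produce, in each excluded case, an induced subgraph that Figure \ref{fig line graph} forbids.

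For the backward direction, suppose first that $G$ is a $p$-group. Then by Theorem \ref{complete p group} the graph $\sg$ is complete, and $K_{|G|}=L(K_{1,|G|})$ is the line graph of a star, so we are done. Next suppose $|G|$ is divisible by exactly two primes $p,q$ and $G$ is an EPPO-group. Then every element order is a power of $p$ or a power of $q$; since any two $p$-power orders (respectively $q$-power orders) are comparable under divisibility while a nontrivial $p$-power and a nontrivial $q$-power are not, and since $o(e)=1$ divides every order, we get $\sg \cong K_1 \vee (K_m \cup K_n)$, where $m$ and $n$ count the non-identity elements of $p$-power and of $q$-power order respectively (both positive by Cauchy's theorem). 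I would then observe that this is exactly $L(\Gamma)$, where $\Gamma$ is the double star obtained by joining the centres of $K_{1,m}$ and $K_{1,n}$ by an edge: the central edge becomes a dominating vertex, the $m$ edges at one centre form $K_m$, the $n$ edges at the other centre form $K_n$, and there are no further adjacencies. Hence $\sg$ is a line graph.

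For the forward direction I would argue by contraposition, assuming $G$ is not an EPPO-group or $|G|$ has at least three prime divisors. If three distinct primes $p,q,r$ divide $|G|$, then Cauchy's theorem supplies elements $x,y,z$ of orders $p,q,r$; together with $e$ they induce $K_{1,3}$ (the centre $e$ is adjacent to all three, while $x,y,z$ have pairwise incomparable orders), which is forbidden. Otherwise $|G|$ has at most two prime divisors but $G$ is not EPPO; since a $p$-group is automatically EPPO, this forces exactly two primes $p,q$ and an element whose order is divisible by both, hence a cyclic subgroup isomorphic to $\mathbb{Z}_{pq}$. Inside it I would choose two distinct elements $w_1,w_2$ of order $pq$ (possible since $\phi(pq)=(p-1)(q-1)\ge 2$), one element $u$ of order $p$, and one element $v$ of order $q$. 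Then $\{e,w_1,w_2,u,v\}$ induces the graph $K_5-e$ obtained from $K_5$ by deleting one edge (every pair is adjacent except $u\sim v$). This is not a line graph: no $H$ satisfies $L(H)=K_5-e$, since the triangle on the three degree-$4$ vertices must come from three edges of $H$ sharing a common vertex $c$ (a triangle in $H$ admits no further edge incident to all three of its sides), whereupon the two degree-$3$ vertices would correspond to edges each meeting all three, forcing both to be incident to $c$ and hence mutually adjacent, contradicting that they are non-adjacent in $K_5-e$. Thus $K_5-e$ is one of the configurations barred by Figure \ref{fig line graph}, and so $\sg$ is not a line graph.

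The routine parts are the backward direction and the three-prime case; the main obstacle is the two-prime non-EPPO case. Here the claw argument used for three primes is not uniformly available — for instance when the only mixed order is $pq$, as in $\mathbb{Z}_{pq}$, the graph $\sg$ is claw-free — so the heart of the necessity proof is spotting the denser obstruction $K_5-e$ and checking that it is genuinely forbidden, whether by identifying it with the relevant graph of Figure \ref{fig line graph} or by the short line-graph argument sketched above.
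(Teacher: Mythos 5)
Your proof is correct, and its necessity half is essentially the paper's own: for three prime divisors you use the same witness set $\{e,x,y,z\}$ inducing the claw, and for two primes without the EPPO property you use the same five-element configuration (identity, two elements of order $pq$, one of order $p$, one of order $q$), whose induced subgraph $K_5$ minus an edge is precisely the graph the paper calls $\Gamma_3$ in Figure \ref{fig line graph}; your self-contained verification that $K_5-e$ is not a line graph is sound (the triangle/star dichotomy for triangles in line graphs) but becomes redundant once that identification is made. The genuine difference is in sufficiency. The paper argues negatively via Lemma \ref{induced lemma}: it supposes some forbidden $\Gamma_i$ with $2\le i\le 9$ is induced in $\sg$, reduces to a common four-vertex configuration $\Gamma'$ contained in all of them, eliminates it by a divisibility case analysis on element orders, and then excludes $K_{1,3}$ by a separate argument. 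You argue positively: for an EPPO group of order $p^aq^b$ you determine the graph completely, $\sg\cong K_1\vee(K_m\cup K_n)$ where $m,n\ge 1$ count the nontrivial elements of $p$-power and $q$-power order, and you exhibit a concrete root graph (the double star, two stars with adjacent centres) so that $\sg=L(\Gamma)$ by direct inspection; the $p$-group case is $K_{|G|}=L(K_{1,|G|})$ in both treatments, via Theorem \ref{complete p group}. Your construction is shorter, bypasses the case analysis and the figure entirely, and yields strictly more information --- an explicit graph $\Gamma$ rather than a mere existence certificate --- while the paper's route keeps the whole proof inside the single forbidden-subgraph framework that it reuses for its later theorems. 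Both arguments are complete and correct.
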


\begin{proof}
    First, assume that $\sg$ is a line graph. If $|G| = p_{1}^{\alpha_{1}}p_{2}^{\alpha_{2}}\cdots p_{k}^{\alpha_{k}}$, where 
$k\geq{3}$, then consider $x,y,z\in G$ such that $o(x) = p_{1}$, $o(y) = p _2$ and $o(z) = p_3$. Note that the subgraph of $\sg$ induced by the set $\{ e, x, y, z \}$ is isomorphic to $\Gamma_{1}$. This implies that $k \leq 2$ and so $o(G)$ is divisible by at most two primes. Now suppose there exists an element of order $p_1p_2$. Consider $x, y, z, w\in G$ such that $o(x) = p_{1}, \ o(y)=o(z)= p_{1}p_{2}$ and $ o(w)= p_{2}$. Then the subgraph induced by the set $\{e,x, y, z, w\}$ is isomorphic to  $\Gamma_3$ (see Figure \ref{fig line graph}); a contradiction. Thus,  $G$ is an EPPO-group.

Conversely, suppose $G$ is an EPPO-group and $o(G)$ is divisible by at most two primes. If $G$ is a $p$-group, then $\sg$ is complete and so $\sg$ is line graph.
Now suppose $|G| = p_{1}^{\alpha_{1}}p_{2}^{\alpha_{2}}$. Assume that $\Gamma$ is an induced subgraph of $\sg$ such that $\Gamma \cong \Gamma _{i}$ for some $i$, where $2 \leq i \leq 9$. Note that $\Gamma$ has an induced subgraph isomorphic to $\Gamma^{'}$ as shown in Figure \ref{fig gamma'}. Since $x \nsim w$, we obtain $x \neq e$. Therefore, $o(x) = p_{1}^{a}$ or $ o(x) = p_{2}^{b}$.
Without loss of generality, let $o(x) = p_{1}^{a}$. Since $x\sim y$ it follows that $o(y) = 1$ or $p_1^{a_1}$. If $y = e$, then $z \neq e$. Since $x \sim z$, we get $o(z) = p_1^{a_2}$. Consequently $z \sim w$ gives $o(w) = p_1^{s}$. Then either $o(x) \vert o(w)$ or $o(w) \vert o(x)$. Consequently, $x \sim w $; a contradiction. If $o(y) = p_1^{a_1}$, then $y \sim w $ implies  that $o(w) = p_{1}^{a_2}$ or $1$. Since $x$ is not adjacent to $w$, therefore $o(w) \neq 1$. Now if $o(w) = p_1^{a_3}$, then $w \sim x$; a contradiction. Thus, $\Gamma$ can not be an induced subgraph of $\sg$.\\
Now if $\Gamma \cong K_{1,3}$ as shown in Figure \ref{fig gamma'}. If $o(d)=1$, then $o(a) \in \{ p_{1}^{\alpha},p_{2}^{\beta}\}$. Without loss of generality, assume that $o(a)= p_1^{\alpha}$. Since $a\nsim b$, it follows that $o(b)=p_2^{\gamma}$. Observe that $o(c) \in \{ p_{1}^{r},p_{2}^{s}\}$. Consequently, either $a\sim c$ or $b\sim c$ which is not possible. Thus, $o(d)\neq 1$ and so $o(d) \in 
 \{ p_{1}^{t},p_{2}^{t^{'}} \}$. Without loss of generality, assume that $o(d)= p_1^{t}$. Since  $a \sim d$ and $d \sim b$ it imply that $o(a), \ o(b)$ and $ \ o(d)$ are divisors of $p_1^{\alpha _1}$. Consequently, $a \sim b$; a contradiction. Thus, $\Gamma_{1}$ cannot be an induced subgraph of $\sg$. Hence, $\sg$ is a line graph.
   \begin{figure}[ht]
    \centering
    \includegraphics[scale=.7]{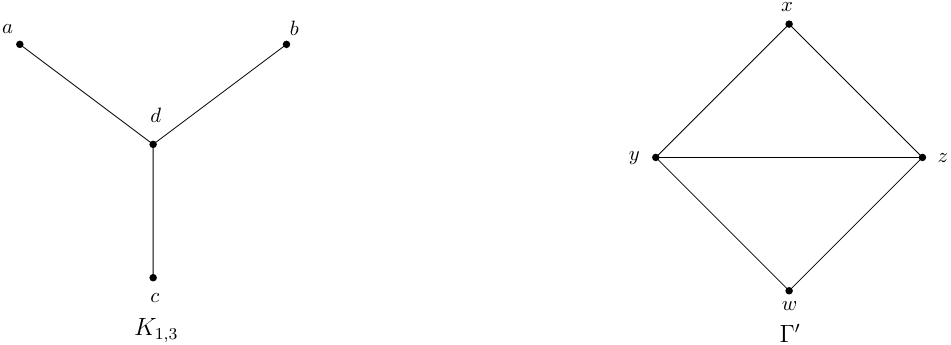}
    \caption{Forbidden induced subgraphs of line graphs.}
    \label{fig gamma'}
\end{figure}
\end{proof}

Let $G$ be a nilpotent group. Then note that $G$ is an EPPO-group if and only if $G$ is a $p$-group. Thus, we have the following corollary of Theorem \ref{line SG graph}.

\begin{corollary}{\label{cor line nilpotent SG graph}}
    Let $G$ be a finite nilpotent group. Then $\sg$ is a line graph of some graph $\Gamma$ if and only if $G$ is a $p$-group.
\end{corollary}

\begin{corollary}{\label{cor line dihedral SG graph}}
    Let $D_{2n}$ be the dihedral group of order $2n$. Then $\mathcal{S}(D_{2n})$ is a line graph of some graph $\Gamma$ if and only if $n = p^{\alpha}$ for some prime $p$ and $\alpha \in \mathbb{N}$.
\end{corollary}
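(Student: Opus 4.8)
The plan is to deduce this directly from Theorem \ref{line SG graph}, which reduces the question to determining exactly when $D_{2n}$ is an EPPO-group whose order is divisible by at most two distinct primes. The essential preliminary step is to read off the set of element orders of $D_{2n}$ from its standard presentation $D_{2n} = \langle x,y : x^{n}=e,\, y^{2}=e,\, yxy^{-1}=x^{-1}\rangle$. The group splits into the cyclic rotation subgroup $\langle x\rangle$ of order $n$ together with the $n$ reflections $x^{i}y$ for $0\leq i\leq n-1$. Each reflection satisfies $(x^{i}y)^{2}=e$, so every reflection has order $2$, while the rotation $x^{k}$ has order $n/\gcd(n,k)$; thus as $k$ ranges over $0,\ldots,n-1$ the rotations realize precisely the order $d$ for each divisor $d$ of $n$. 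Consequently the set of element orders of $D_{2n}$ is $\{\, d : d\mid n\,\}\cup\{2\}$.

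Next I would establish that $D_{2n}$ is an EPPO-group if and only if $n=p^{\alpha}$ for some prime $p$. If $n$ has two distinct prime divisors $p$ and $q$, then $pq\mid n$, so the rotation subgroup contains an element of order $pq$, which is not a prime power; hence $D_{2n}$ is not an EPPO-group. Conversely, if $n=p^{\alpha}$, then every divisor of $n$ is a power of $p$, and together with the reflection order $2$ (itself a prime power) every element order is a prime power, so $D_{2n}$ is an EPPO-group. This is the elementary number-theoretic equivalence ``every divisor of $n$ is a prime power $\iff n$ is a prime power.''

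It then remains only to verify the second hypothesis of Theorem \ref{line SG graph}. Here I would observe that once $n=p^{\alpha}$ the order $|D_{2n}|=2n=2p^{\alpha}$ is divisible only by the primes in $\{2,p\}$, hence by at most two distinct primes, so this condition is automatically satisfied. Combining the two parts, $\mathcal{S}(D_{2n})$ is a line graph if and only if $D_{2n}$ is an EPPO-group and $|D_{2n}|$ has at most two distinct prime divisors, which by the order analysis above holds if and only if $n=p^{\alpha}$.

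I do not expect a genuine obstacle in this argument, since the heavy lifting has already been done in Theorem \ref{line SG graph}: the corollary is essentially a specialization. The only points requiring a little care are the correct determination of the element orders of $D_{2n}$ and the remark that the ``at most two primes'' hypothesis is redundant for dihedral groups, because it is forced by the EPPO condition $n=p^{\alpha}$.
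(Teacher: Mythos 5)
Your proposal is correct and follows essentially the same route as the paper: both deduce the corollary from Theorem \ref{line SG graph} by observing that the cyclic rotation subgroup of order $n$ forces an element of order $pq$ when $n$ is not a prime power (so $D_{2n}$ fails to be EPPO), and that $n=p^{\alpha}$ makes $D_{2n}$ an EPPO-group of order $2p^{\alpha}$. Your write-up is slightly more explicit than the paper's---you list all element orders and verify the ``at most two primes'' hypothesis, which the paper leaves implicit---but these are presentational differences, not a different argument.
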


\begin{proof}
    Let $\mathcal{S}(D_{2n})$ be a line graph and let $n$ is not a power of a prime. Note that $D_{2n}$ has a cyclic subgroup of order $n$ and so $G$ contains an element of order $pq$, where $p, q$ are distinct prime divisors of $n$. Consequently, $G$ is not an EPPO-group; a contradiction to Theorem \ref{line SG graph}.

    Conversely, if $n = p^{\alpha}$ then $D_{2n}$ is an EPPO-group and therefore $\mathcal{S}(D_{2n})$ is a line graph of some graph $\Gamma$.
\end{proof}

  \begin{theorem}{\label{line S*G graph}}
 The proper order supergraph $\ss$ is a line graph of some graph $\Gamma$ if and only if either $G\cong \mathbb{Z}_6$ or $G$ is an EPPO-group.
\end{theorem}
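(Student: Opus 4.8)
The plan is to prove both implications, relying on the Beineke forbidden-subgraph characterization (Lemma \ref{induced lemma}) and on the fact that $\ss$ is obtained from $\sg$ by deleting the identity.

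For sufficiency I would first dispose of the EPPO case. If $G$ is an EPPO-group, every non-identity element has prime-power order, and for $x,y\in G\setminus\{e\}$ we have $x\sim y$ in $\ss$ if and only if $o(x)$ and $o(y)$ are powers of the same prime. Hence $\ss$ is the disjoint union $\bigsqcup_p K_{m_p}$, where $m_p$ is the number of elements of $p$-power order. Since each $K_m=L(K_{1,m})$ and a disjoint union of line graphs is a line graph, $\ss$ is a line graph, regardless of the number of primes dividing $|G|$ (this is exactly what the deletion of $e$ buys us compared with Theorem \ref{line SG graph}). For $G\cong\mathbb{Z}_6$ one checks directly that the two elements of order $6$ are dominating, the involution is non-adjacent to the two elements of order $3$, and the two elements of order $3$ are adjacent; thus $\ss\cong K_5-P_3$, which is the line graph of the graph obtained from a triangle by attaching two pendant edges at a common vertex.

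For necessity I would argue by contrapositive: assume $\ss$ is a line graph and $G$ is not an EPPO-group, and deduce $G\cong\mathbb{Z}_6$. Being non-EPPO, $G$ contains an element whose order is divisible by two distinct primes, so a suitable power has order $pq$. The main tool is the graph $\Gamma_3$ of Figure \ref{fig line graph} ($\cong K_5$ minus an edge): whenever $G$ has three elements $y_1,y_2,y_3$ of a common order $m$ together with $u,v$ satisfying $o(u)\mid m$, $o(v)\mid m$ and $o(u)\nmid o(v)$, $o(v)\nmid o(u)$, the set $\{u,v,y_1,y_2,y_3\}$ induces $\Gamma_3$. Using this I would establish, in order: (i) every element order divisible by two primes is divisible by exactly $2$ and $3$, because any other pair gives $\phi(pq)\geq 3$ elements of order $pq$ inside one cyclic $\mathbb{Z}_{pq}$, which with its elements of order $p$ and $q$ form $\Gamma_3$; (ii) the only non-prime-power order occurring is $6$, since an order divisible by $12$ or by $18$ supplies three equal-order elements together with a suitable incomparable pair (orders $4,3$ or $2,9$), again giving $\Gamma_3$; and (iii) there are exactly two elements of order $6$, for three of them with one involution and one element of order $3$ form $\Gamma_3$. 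I would then fix the remaining counts with two more forbidden configurations from Figure \ref{fig line graph}: three elements of order $3$ (a triangle), one involution, and the two elements of order $6$ induce $K_6$ minus a claw, forcing exactly two elements of order $3$; two involutions with the two order-$3$ and two order-$6$ elements induce $K_2\vee 2K_2$, forcing a unique involution. Both six-vertex graphs are minimal non-line graphs (every proper induced subgraph is a line graph, while a Krausz-type clique-partition check shows they are not), hence forbidden by Lemma \ref{induced lemma}.

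With a unique involution $z$ secured, uniqueness forces $z$ to be central, which lets me eliminate both a third prime and all higher prime powers. If some $s$ had order $r\notin\{2,3\}$, then $zs$ would have order $2r$, a non-prime-power order different from $6$, contradicting (ii); so only $2$ and $3$ divide $|G|$. An element of order $2^k$ with $k\geq2$ is adjacent to $z$ but to none of the elements of order $3$ or $6$, so together with $z$, the two order-$6$ and the two order-$3$ elements it induces a forbidden six-vertex graph (a $K_4$ on the order-$3$ and order-$6$ vertices, with $z$ joined to the two order-$6$ vertices and the high-order element pendant at $z$); an element of order $3^k$ with $k\geq2$ yields the analogous forbidden graph on the order-$3$ side. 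Hence the element orders are precisely $1,2,3,6$ with multiplicities $1,1,2,2$, so $|G|=6$, and since $G$ is non-EPPO, $G\cong\mathbb{Z}_6$. The routine parts are the EPPO computation and the verification that each exhibited graph is genuinely forbidden; I expect the main obstacle to be organizing the necessity argument so that each forbidden configuration is available exactly when needed—most delicately the elimination of a third prime, which is invisible to the claw and to $\Gamma_3$ and instead requires the group-theoretic centrality of the unique involution to manufacture a forbidden element of order $2r$.
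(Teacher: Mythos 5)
Your sufficiency argument and your steps (i)--(iii), the uniqueness of the involution via $K_2\vee 2K_2$ (the paper's $\Gamma_6$), its centrality, and the elimination of primes $r\notin\{2,3\}$ via $o(zs)=2r$ are all sound. However, two of the graphs you declare to be forbidden are in fact line graphs, so the corresponding steps genuinely fail. First, $K_6$ minus a claw, i.e.\ the graph $(K_3\cup K_1)\vee K_2$ induced by three order-$3$ elements $a_1,a_2,a_3$, the involution $z$ and the order-$6$ elements $x_1,x_2$, admits the Krausz partition $\{z,x_1\},\ \{z,x_2\},\ \{x_1,x_2,a_1,a_2,a_3\}$ (each edge in exactly one clique, each vertex in at most two); it is the line graph of a triangle with three pendant edges attached at one vertex. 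So it yields no contradiction, and your claim that ``a Krausz-type clique-partition check shows they are not [line graphs]'' is false for this graph; hence you have not forced ``exactly two elements of order $3$.'' Second, your order-$3^k$ configuration --- $K_4$ on $\{x_1,x_2,a_1,a_2\}$ with $z$ joined to $x_1,x_2$ and $w'$ (of order $9$) joined to $a_1,a_2$ --- admits the Krausz partition $\{z,x_1\},\{z,x_2\},\{w',a_1\},\{w',a_2\},\{x_1,x_2,a_1,a_2\}$; it is the line graph of the bowtie (two triangles sharing a vertex). Note the asymmetry with your order-$2^k$ case, which is correct: there the extra vertex $w$ is pendant at $z$, giving the paper's $\Gamma_5$, whereas on the order-$3$ side $w'$ has two neighbours and the graph becomes realizable.

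Both gaps are repairable inside your own framework, and this is worth recording. Once you know the involution $z$ is unique and central, an element $w'$ of order $9$ gives $o(zw')=18$, a non-prime-power order different from $6$, contradicting your step (ii) --- exactly the trick you already use for primes $r\notin\{2,3\}$ (it fails only for order $4$, where $z=w^2$, which is why $\Gamma_5$ is needed there). Likewise, ``exactly two elements of order $3$'' need not be forced by a forbidden subgraph at all: the map $a\mapsto za$ injects the set of order-$3$ elements into the set of order-$6$ elements, which you have already shown has size two. With these two replacements your argument closes and is genuinely different from the paper's: the paper, after reducing to a unique cyclic subgroup $H=\langle x\rangle$ of order $6$, proves $C_G(x)=\langle x\rangle$, embeds $G/C_G(H)$ into $\mathrm{Aut}(H)\cong\mathbb{Z}_2$ to get $|G|\in\{6,12\}$, and then kills $D_{12}$ and $Q_{12}$ with $\Gamma_6$ and $\Gamma_5$; your route replaces the automorphism-group bound by the central-involution counting argument. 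But as submitted, the proof asserts two false forbidden-subgraph claims and is therefore incorrect.
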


\begin{proof}
First, assume that $\ss$ is a line graph. Let $G$ contains an element of order $d \ (>6)$ which is not a power of a prime. Then $G$ has at least three elements of order $d$. Consider $x_1,x_2,x_3,y_1,y_2 \in G$ such that $o(x_1) = o(x_2) = o(x_3) = d, \ o(y_1) = p$ and $o(y_2) = q$, where $p$ and $q$ are distinct prime divisors of $d$. Then the subgraph of $\ss$ induced by the set $\{x_1,x_2,x_3,y_1,y_2\}$ is isomorphic to $\Gamma_{3}$; a contradiction. Thus, $G$ cannot have an element of order $d$.

 Now suppose that $G$ has an element of order 6. Further we have the following two cases. \\
\noindent\textbf{Case-1:} 
\textit{ $G$ has more than one cyclic subgroup of order $6$}. In this case $G$ has at least four elements of order $6$. Consider ${x_1, x_2, x_3, y_1,y_2} \in G$ such that $o(x_1) = o(x_2) = o(x_3) = 6, \  o(y_1)=2$ and $o(y_2)= 3$. Then the subgraph of $ \ss$ induced by the set $\{x_1, x_2, x_3, y_1,y_2\}$ is isomorphic to $\Gamma_3$; a contradiction. Therefore, this case is not possible.\\
\noindent\textbf{Case-2:} 
\textit{$G$ has exactly one cyclic subgroup of order $6$}. In this case, we prove that $G$ is isomorphic to $\mathbb{Z}_6$. Let $H = \langle x \rangle$ be the unique cyclic subgroup of order $6$ in $G$. Then $g^{-1}Hg = H$ for $g \in G$ and so $H$ is a normal subgroup of $G$. Now we claim that $C_{G}(x) = \langle x \rangle$. Clearly $\langle x \rangle$ $\subseteq$ $C_{G}(x)$. Let $y$ $\in$ $C_{G}(x)\setminus\langle x \rangle$. Then $o(y)$ is the power of a prime. Consider $o(y) = p^{\alpha}$, for some prime $p$. If $\mathrm{gcd}(6,p) = 1$, then $\langle xy \rangle$ is a cyclic subgroup of order $6p^{\alpha}$ which is not possible. If $p = 2$, then note that $yx^2 = x^2y $ and $o(x^2) = 3$. Consequently, $G$ has a cyclic subgroup of order $3.2^{\alpha}$ containing $y$, which is not possible. By using a similar argument, we obtain a contradiction for $p = 3$. This proves our claim.
    Thus, $\langle x \rangle$ is a normal subgroup of $G$ and $C_{G}(x) = \langle x \rangle $. For a normal subgroup $H$, it is known that $\frac{G}{C_{G}(H)}$ is a subgroup of $\mathrm{Aut}(H)$. Thus, $o(G) \in \{6,12\}$. Therefore, $G$ is isomorphic to one of the three groups: $ \mathbb{Z}_6$, ${D}_{12}$, ${Q}_{12}$. 

      If $G \cong {D}_{12}$, then $G$ has seven elements of order $2$, two elements of order $3$ and two elements of order $6$. Consider $x_1,x_2,y_1,y_2,z_1,z_2$ such that $o(x_1) = o(x_2) = 2$, $o(y_1) =o(y_2) = 3$ and $o(z_1)=o(z_2) = 6$. The subgraph of $\ss$ induced by the set $\{x_1,x_2,y_1,y_2,z_1,z_2\}$ is isomorphic to $\Gamma_6$; a contradiction.
     
     If $G \cong {Q}_{12}$, then $G$ has two elements of order $6$, two elements of order $3$, one element of order $2$ and six elements of order $4$. Let $x_1,x_2,y_1,y_2,z_1,z_2 \in G$  be such that $o(x_1) = o(x_2) = 6$, $o(y_1) =o(y_2) = 3$, $o(z_1) = 2$ and $o(z_2) = 4$. The subgraph of $\ss$ induced by the set $\{x_1,x_2,y_1,y_2,z_1,z_2\}$ is isomorphic to $\Gamma_5$, a contradiction. It follows that $G \cong \mathbb{Z}_6$.

    Conversely, if $G \cong \mathbb{Z}_6$ then note that $\ss$ is a line graph of the graph $\Gamma$ (see Figure \ref{fig Sz6}(a)). 
     \begin{figure}[ht]
    \centering
    \includegraphics[scale=.9]{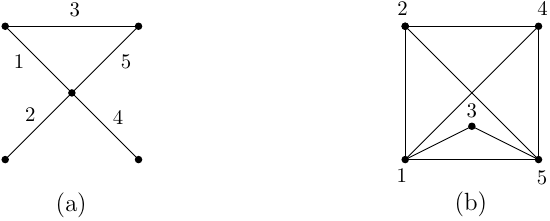}
    \caption{(a) The graph $\Gamma$ (b) $L(\Gamma)= \mathcal{S}^*(\mathbb{Z}_6)$. }
    \label{fig Sz6}
\end{figure}
    Now let $G$ be an EPPO-group, where $|G| = p_1^{{\alpha}_{1}}\cdots p_{k}^{{\alpha}_{k}}$. Then $\ss$ is the disjoint union of complete graphs $K_{n_1},K_{n_2},\ldots$ , $K_{n_k}$, respectively, where $n_i ( 1\leq i \leq k)$ is the number of elements whose order is divisible by $p_{i}$. Note that $\ss = L( K_{1,n_1} \cup K_{1,n_2} \cup \cdots \cup K_{1,n_k})$. This completes our proof.
        
\end{proof}
The order supergraph $\sg$ is dominatable if $\sg$ has a dominating vertex other than identity. 
In order to prove the  Theorem \ref{line S**G graph}, we require the following lemma.

\begin{lemma}{\label{dominatable}}
    The graph $\sg$ is dominatable if and only if there exists an element $x$ in $G$ such that $o(x) = \mathrm{exp}(G)$. Moreover, $x$ is a dominating vertex in $\sg$.
\end{lemma}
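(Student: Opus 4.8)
The plan is to prove the two implications separately, with the reverse implication (extracting an element of order $\mathrm{exp}(G)$ from a dominating vertex) carrying essentially all of the work; the forward implication and the ``moreover'' clause are immediate. For the easy part, if $x \in G$ satisfies $o(x) = \mathrm{exp}(G)$, then $o(y) \mid \mathrm{exp}(G) = o(x)$ for every $y \in G$, so $o(y)\mid o(x)$ and hence $x \sim y$ in $\sg$; thus $x$ is a dominating vertex, which is exactly the ``moreover'' assertion. Since $G$ is nontrivial we have $o(x) = \mathrm{exp}(G) > 1$, so $x \neq e$ and $\sg$ is dominatable.

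For the reverse implication I would fix a dominating vertex $g \neq e$ and set $m = o(g) > 1$. By definition of $\sg$, for every $y \in G$ the order $o(y)$ is comparable to $m$, meaning $o(y)\mid m$ or $m \mid o(y)$. The structural fact I would exploit is that the set $S$ of element orders of $G$ is closed under divisors: if $y$ has order $s$ then $\langle y\rangle$ is cyclic of order $s$ and contains an element of order $d$ for every $d \mid s$. Consequently every divisor of every element order must itself be comparable to $m$, which is a much stronger constraint than comparability of the orders alone.

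The core of the argument is the following divisibility observation: \emph{if some $s \in S$ is a proper multiple of $m$, then $m$ is a prime power.} Such an $s$ admits a prime $p$ with $v_p(s) > v_p(m)$; were $m$ to have a second prime divisor $q \neq p$, the divisor $d$ of $s$ determined by $v_p(d) = v_p(s)$, $v_q(d) = 0$, and $v_r(d) = v_r(m)$ for all other primes $r$ would satisfy $d \mid s$, $d \nmid m$, and $m \nmid d$, so $d$ would be incomparable to $m$ — contradicting that $g$ is dominating, since $S$ is divisor-closed and hence contains $d$. I expect the one genuinely fiddly step to be setting up this explicit $d$ in terms of $p$-adic valuations and verifying the three divisibility relations; everything else is bookkeeping.

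With this observation I would split into two cases. If $m$ is \emph{not} a prime power, then no proper multiple of $m$ lies in $S$, so every element order divides $m$; hence $\mathrm{exp}(G) = \mathrm{lcm}(S) = m = o(g)$ and $g$ itself is the required element. If $m = p^a$ is a prime power, I claim $G$ is a $p$-group: any element order is comparable to $p^a$, and if it were divisible by a prime $q \neq p$, then divisor-closure of $S$ would place $q \in S$, incomparable to $p^a$, a contradiction; thus all element orders are powers of $p$, and Cauchy's theorem rules out any other prime dividing $|G|$. In that case $\sg$ is complete by Theorem \ref{complete p group}, $\mathrm{exp}(G)$ is the maximal element order, and any element attaining it has order $\mathrm{exp}(G)$. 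In both cases $G$ contains an element of order $\mathrm{exp}(G)$, completing the proof.
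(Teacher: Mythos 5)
Your proof is correct, and it reaches the lemma by a genuinely different route than the paper. The paper handles $p$-groups trivially at the outset (there $\mathcal{S}(G)$ is complete) and, for non-$p$-groups, proves the sharper fact that the dominating vertex $x$ itself has order $\mathrm{exp}(G)$: writing $\mathrm{exp}(G)=p_1^{\alpha_1}\cdots p_k^{\alpha_k}$, if $p_i^{\alpha_i}\nmid o(x)$ for some $i$, then adjacency of $x$ to an element of order exactly $p_i^{\alpha_i}$ forces $o(x)$ to be a power of $p_i$, and then $x$ cannot be adjacent to an element of order $p_j$ with $j\neq i$ (which exists since $G$ is not a $p$-group) --- a contradiction. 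You instead anchor on $m=o(g)$ and the divisor-closed set $S$ of element orders, prove the auxiliary claim that a proper multiple of $m$ in $S$ forces $m$ to be a prime power (via the valuation construction of $d$), and then split cases: non-prime-power $m$ gives $o(g)=\exp(G)$ directly, while prime-power $m$ forces $G$ to be a $p$-group, where the exponent is attained by an element of maximal order. Both proofs run on the same mechanism --- producing an element order incomparable to that of the dominating vertex --- but your decomposition is different, and it makes explicit the divisor-closure fact that the paper uses only implicitly (namely, that elements of order exactly $p_i^{\alpha_i}$ exist because the prime-power parts of $\mathrm{exp}(G)$ are realized as element orders). The trade-off: the paper's argument is shorter and shows that the dominating vertex itself realizes the exponent whenever $G$ is not a $p$-group, whereas in your prime-power case the element realizing $\mathrm{exp}(G)$ need not be $g$; on the other hand, your write-up is more self-contained and incidentally avoids the sign slip in the paper's printed proof, where ``there exists $i$ such that $p_i^{\alpha_i}\mid o(x)$'' should read $p_i^{\alpha_i}\nmid o(x)$.
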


\begin{proof}
If $G$ is a $p$-group, then $\sg$ is complete. Thus, the result holds trivially. We may now suppose that $G$ is not a $p$-group. Let $\sg$ be a dominatable graph. Then there exists a non-identity element $x$ of $G$ such that $x$ is adjacent to every other element of $G$. We show that $o(x)= \mathrm{exp}(G)$. Assume that $\mathrm{exp}(G)= p_1^{\alpha_1}p_2^{\alpha_2}\cdots{p_k^{\alpha_k}}$. On contrary, let $o(x) \neq \mathrm{exp}(G)$. Then there exists $i \in [k] $ such that $p_i^{\alpha_{i}} \vert o(x)$. Consider $g \in G$ such that $o(g)=p_i^{\alpha_{i}}$. It follows that $x \sim g $ if and only if $o(x) = p_i^{\beta_{i}}$ for some $\beta_i < \alpha_i$. But $o(x) = p_i^{\beta_{i}}$ implies that $x$ is not adjacent to the element $z$ such that $o(z)= p_j$ for some $i \neq j \in [k]$; a contradiction. Conversely, let $G$ contains an element $x$ such that $o(x) = \mathrm{exp}(G)$. Then for any $y \in G$, we have $o(y)\vert o(x)$. Thus, $y \sim x$ in $\sg$ and so $\sg$ is dominatable.
\end{proof}

\begin{remark}
    If $G$ is a finite group which is not a $p$-group, then $x \ (\neq e) \in G$ is a dominating vertex of $\sg$ if and only if $o(x) = \mathrm{exp}(G)$. Thus, $V(\sss) = G\setminus (S\cup \{e \})$, where $S =\{ y \in G: o(y) = \mathrm{exp}(G)\}$.
\end{remark}

\begin{theorem}{\label{line S**G graph}}
    Let $G$ be a finite group such that $\sg$ is dominatable. Then $\sss$ is a line graph of some graph $\Gamma$ if and only if one of the following holds:
    \begin{itemize}
        \item[(i)] $G$ is a $p$-group.
        \item[(ii)] The order of $G$ is divisible by two primes and the order of each element of $G$ is square-free.
    \end{itemize}
\end{theorem}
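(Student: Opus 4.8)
The plan is to invoke the forbidden-subgraph criterion of Lemma~\ref{induced lemma}, so that establishing when $\sss$ is a line graph reduces to deciding when none of $\Gamma_1,\ldots,\Gamma_9$ occurs as an induced subgraph. Throughout I will exploit the structural picture furnished by Lemma~\ref{dominatable} and the remark following it: since $\sg$ is dominatable, the dominating vertices are exactly the elements of order $\mathrm{exp}(G)$, so writing $n=\mathrm{exp}(G)=p_1^{\alpha_1}\cdots p_k^{\alpha_k}$, the vertex set of $\sss$ consists of all elements whose order is a divisor of $n$ other than $1$ and $n$. Two vertices are adjacent precisely when their orders are comparable under divisibility; hence the elements of a fixed order form a clique, and two order-classes are either completely joined or completely non-joined according to divisibility. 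The decisive simplification is that dominatability yields an element $g$ with $o(g)=n$, so $\langle g\rangle\cong\mathbb{Z}_n$ supplies at least $\phi(d)$ elements of \emph{every} order $d\mid n$; this is what lets me control element counts in the constructions below.

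For sufficiency I would argue directly. If $G$ is a $p$-group then $\sg$ is complete by Theorem~\ref{complete p group}, every non-identity vertex is dominating, and $\sss$ is the empty graph, which is vacuously a line graph. If $|G|$ is divisible by exactly two primes $p,q$ and every element order is square-free, then the only element orders are $1,p,q,pq$, so $V(\sss)$ splits into the clique of order-$p$ elements and the clique of order-$q$ elements with no edges between them (as $p\nmid q$ and $q\nmid p$); thus $\sss\cong K_a\cup K_b=L(K_{1,a}\cup K_{1,b})$ is a line graph.

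For necessity I assume $\sss$ is a line graph while $G$ is not a $p$-group (so $k\ge 2$), and show that if condition (ii) fails then $\sss$ contains a forbidden induced subgraph. The generic obstruction I will build is $\Gamma_3$ (i.e.\ $K_5$ minus an edge): a triangle of three elements of a common composite order $d\ (\neq n)$, together with two elements whose orders are non-comparable divisors of $d$, each joined to the whole triangle but not to one another. Concretely, if $k\ge 3$ I pick primes $p,r\mid n$ with $r\ge 5$ (possible since three distinct primes force one to be at least $5$), take $d=pr\neq n$, use $\phi(pr)\ge 4$ to get the triangle, and adjoin one element of order $p$ and one of order $r$. If $k=2$ and some order is not square-free, I write $n=p^{\alpha}q^{\beta}$ with $\alpha\ge 2$; when $(\alpha,\beta)\neq(2,1)$ I use $d=p^2q\ (\neq n)$ with the non-comparable divisors $p^2$ and $q$, and when $n=p^2q$ with $\{p,q\}\neq\{2,3\}$ I use $d=pq$ with divisors $p,q$, in each case checking $\phi(d)\ge 3$ and $d\neq n$.

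The remaining obstacle, and the step I expect to be most delicate, is the tight exponent case $n\in\{12,18\}$ (that is, $n=p^2q$ with $\{p,q\}=\{2,3\}$), where $\phi(pq)=2$ leaves too few equal-order elements to form the triangle. Here I instead exhibit $\Gamma_5$: a $K_4$ (on two order-$6$ and two order-$3$ elements when $n=12$, respectively two order-$3$ and two order-$9$ elements when $n=18$), a fifth vertex adjacent to exactly two vertices of this $K_4$, and a pendant attached only to that fifth vertex; a routine divisibility check identifies the induced subgraph as $\Gamma_5$. Since every way for (ii) to fail (three or more primes, or two primes with a non-square-free order) thus produces a forbidden induced subgraph, the three cases together close the equivalence.
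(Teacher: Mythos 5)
Your proposal is correct: every construction you describe uses only elements available inside the cyclic subgroup generated by an element of order $\mathrm{exp}(G)$ (which dominatability supplies), all the chosen vertices are non-dominating, the adjacency checks are right, and your case split ($k\geq 3$; $k=2$ with $\mathrm{exp}(G)$ not square-free, subdivided by whether $p^2q\neq \mathrm{exp}(G)$ and whether $\{p,q\}=\{2,3\}$) does exhaust all ways condition (ii) can fail. Your sufficiency argument coincides with the paper's ($\sss$ empty for $p$-groups; $\sss\cong K_a\cup K_b=L(K_{1,a}\cup K_{1,b})$ under (ii)). The difference lies in the necessity decomposition. The paper never uses $\Gamma_3$: for $k=2$ it splits on which exponent of $\mathrm{exp}(G)=p_1^{\beta_1}p_2^{\beta_2}$ (with $p_1<p_2$) exceeds $1$, exhibiting $\Gamma_5$ on orders $p_1,p_1^2,p_2,p_2,p_1p_2,p_1p_2$ when $\beta_1>1$ and $\Gamma_6$ on orders $p_2,p_2,p_1p_2,p_1p_2,p_2^2,p_2^2$ when $\beta_2>1$, and for $k\geq 3$ it exhibits $\Gamma_6$ on orders $p_2,p_2,p_3,p_3,p_2p_3,p_2p_3$. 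These configurations need only \emph{pairs} of equal-order elements, and such pairs always exist because every order involved in a pair is divisible by an odd prime (so $\phi\geq 2$), while only single elements of orders $p_1$ and $p_1^2$ are required. Your argument instead leans on $\Gamma_3=K_5$ minus an edge, which needs a \emph{triple} of elements of a common composite order $d$; this makes the generic cases pleasantly uniform (one construction covers $k\geq 3$ and most of $k=2$, via $\phi(d)\geq 4$), but it is exactly what forces your delicate exceptional case $\mathrm{exp}(G)\in\{12,18\}$, where $\phi(6)=2$ kills the triangle and you fall back on the $\Gamma_5$ configuration — essentially the one the paper uses from the outset. Both routes are complete; the paper's choice buys uniformity of element counts with no exceptional exponents, yours buys a single generic forbidden subgraph at the cost of a boundary case.
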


\begin{proof}
Let $\sss$ be a line graph and $|G| = p_1^{\alpha_1}p_2^{\alpha_2}\cdots{p_k^{\alpha_k}}$, where $p_1< p_2< \cdots < p_k$ are primes. If $k = 1$ then $G$ is a $p$-group. Now we discuss the following cases.


\noindent\textbf{Case-1:} $k =2$. Let $\mathrm{exp}(G) = p_1^{\beta_1}p_2^{\beta_2}$. If $\beta_1, \beta_2 = 1$, then we obtain condition (ii). We may now suppose that $\beta_i > 1$ for some $i \in \{1,2\}$. Now we discuss the following subcases:

\textbf{Subcase-1.1:} $\beta_1 > 1$. Consider $x_1,x_2,y_1,y_2,z_1,z_2 \in G$ such that $o(x_1) = p_1, 
  \ o(x_2) = p_1^{2}, \  o(y_1)=o(y_2) = p_2$ and $o(z_1)=o(z_2) = p_1p_2$. The subgraph of $\sss$ induced by the set $\{ x_1,x_2,y_1,y_2,z_1,z_2\}$ is isomorphic to $\Gamma_5$; a contradiction.

\textbf{Subcase-1.2:} $\beta_2 > 1$. Consider $x_1,x_2,y_1,y_2,z_1,z_2 \in G$ such that $o(x_1) =  o(x_2) = p_2, \ o(y_1)=o(y_2) = p_1p_2$ and $o(z_1)=o(z_2) = p_2^{2}$. The subgraph of $\sss$ induced by the set $\{ x_1,x_2,y_1,y_2,z_1,z_2\}$ is isomorphic to $\Gamma_6$; a contradiction.

\noindent\textbf{Case-3:} $k \geq 3$. Let $\mathrm{exp}(G) = p_1^{\beta_1}p_2^{\beta_2}\cdots{p_k}^{\beta_k}$, for some $1 \leq \beta_i \leq \alpha_{i}$ for $i \in [k]$. Let $x_1,x_2,y_1,y_2,z_1,z_2 \in G$ be such that $o(x_1) = o(x_2) = p_2, \  o(y_1)= o(y_2) = p_3$ and $o(z_1) =o(z_2) = p_2p_3$. The subgraph of $\sss$ induced by the set $\{ x_1,x_2,y_1,y_2,z_1,z_2\}$
 is isomorphic to $\Gamma_6$, a contradiction.

 Conversely, if $G$ is a $p$-group then $\sss$ is the empty graph and so is a line graph. If $G$ satisfies condition (ii), then $\pi_G = \{ 1, p, q, pq\}$. Note that $\sss \cong K_{|S_1|} \cup K_{|S_2|}$, where $S_1$ is the set of elements of order $p$ and $S_2$ is the set of elements of order $q$. Observe that $K_{|S_1|} \cup K_{|S_2|} = L(K_{1,{|S_1|}} \cup K_{1,{|S_2|}})$. This completes our proof.
\end{proof}

\begin{corollary}{\label{cor line Q4n S**G graph}}
    Let $G$ be the generalized quaternion group $Q_{4n}$. Then $\mathcal{S}^{**}(Q_{4n})$ is a line graph of some graph $\Gamma$ if and only if $n=2^k$ for some $k\in \mathbb{N}$.
\end{corollary}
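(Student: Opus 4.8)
The plan is to reduce both directions to the structural theorems already established, after recording the relevant facts about $Q_{4n}$. Recall that $|Q_{4n}|=4n$, that $\langle a\rangle$ is cyclic of order $2n$ and contains the unique involution $a^{n}=b^{2}$, and that each of the $2n$ elements of the coset $\langle a\rangle b$ squares to $a^n$ and hence has order $4$. In particular $Q_{4n}$ always possesses an element of order $4$ (whose order is not square-free), and the largest order of any element is $2n$. For the backward direction I would argue directly: if $n=2^{k}$ then $|Q_{4n}|=2^{k+2}$ is a prime power, so $Q_{4n}$ is a $2$-group; by Theorem \ref{complete p group} the graph $\mathcal{S}(Q_{4n})$ is complete, every non-identity vertex is dominating, and $\mathcal{S}^{**}(Q_{4n})$ is the empty graph, which is trivially a line graph (this is exactly condition (i) of Theorem \ref{line S**G graph}).

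For the forward direction I would prove the contrapositive: assume $n$ is not a power of $2$ and fix an odd prime $p$ dividing $n$. Since $p\mid 4n$, the group $Q_{4n}$ is not a $p$-group, so condition (i) of Theorem \ref{line S**G graph} already fails. The point I expect to be the main obstacle is that Theorem \ref{line S**G graph} applies only when $\mathcal{S}(Q_{4n})$ is dominatable, and this depends on the parity of $n$. When $n$ is even we have $4\mid 2n$, so $\mathrm{exp}(Q_{4n})=2n=o(a)$; by Lemma \ref{dominatable} the graph $\mathcal{S}(Q_{4n})$ is dominatable and Theorem \ref{line S**G graph} applies. Condition (i) fails as noted, and condition (ii) fails because $Q_{4n}$ has an element of order $4$, which is not square-free; hence $\mathcal{S}^{**}(Q_{4n})$ is not a line graph.

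When $n$ is odd with $n>1$ the situation differs: here $\mathrm{exp}(Q_{4n})=\mathrm{lcm}(2n,4)=4n$, yet no element attains order $4n$ since the maximum order is $2n$, so by Lemma \ref{dominatable} the graph $\mathcal{S}(Q_{4n})$ is \emph{not} dominatable. Consequently the only dominating vertex of $\mathcal{S}(Q_{4n})$ is $e$, whence $\mathcal{S}^{**}(Q_{4n})=\mathcal{S}^{*}(Q_{4n})$, and I would instead invoke Theorem \ref{line S*G graph}. Since $Q_{4n}$ is non-abelian we have $Q_{4n}\not\cong\mathbb{Z}_6$, and $a$ has order $2n$, which is divisible by the two distinct primes $2$ and $p$, so $Q_{4n}$ is not an EPPO-group; therefore $\mathcal{S}^{*}(Q_{4n})$ is not a line graph. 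In either parity case we obtain a contradiction, so $\mathcal{S}^{**}(Q_{4n})$ being a line graph forces $n=2^{k}$.

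The crux of the argument, and the step that requires care, is recognizing the dominatability dichotomy: one cannot apply Theorem \ref{line S**G graph} uniformly, because $\mathcal{S}(Q_{4n})$ fails to be dominatable precisely when $n$ is odd (for instance $Q_{12}$, already treated in the proof of Theorem \ref{line S*G graph}), and that case must be routed through Theorem \ref{line S*G graph} via the identity $\mathcal{S}^{**}=\mathcal{S}^{*}$. As an alternative that avoids both theorems, one could exhibit a forbidden subgraph by hand—for even $n$ the six elements of orders $2,4,p,p,2p,2p$ induce a copy of $\Gamma_5$—but the theorem-based route is cleaner once the ever-present element of order $4$ and the parity-dependent dominatability are both accounted for.
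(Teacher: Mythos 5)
Your proof is correct, and it is in fact more careful than the paper's own argument, which consists of a single line: since $Q_{4n}$ contains an element of order $4$, condition (ii) of Theorem \ref{line S**G graph} can never hold, so that theorem forces $Q_{4n}$ to be a $p$-group, i.e. $n=2^{k}$. What the paper never verifies is the standing hypothesis of Theorem \ref{line S**G graph} that $\mathcal{S}(Q_{4n})$ be dominatable, and, exactly as you observe, this hypothesis genuinely fails when $n$ is odd and $n>1$: then $\mathrm{exp}(Q_{4n})=\mathrm{lcm}(2n,4)=4n$, no element attains this order (the maximum is $2n$), so by Lemma \ref{dominatable} the only dominating vertex is $e$. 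Your parity dichotomy is precisely the repair this needs: for even $n$ the element $a$ has order $2n=\mathrm{exp}(Q_{4n})$, the graph is dominatable, and Theorem \ref{line S**G graph} applies (condition (i) fails since $4n$ has an odd prime divisor, condition (ii) fails because of the element of order $4$); for odd $n>1$ you instead use $\mathcal{S}^{**}(Q_{4n})=\mathcal{S}^{*}(Q_{4n})$ and invoke Theorem \ref{line S*G graph}, noting $Q_{4n}\not\cong\mathbb{Z}_6$ and that the element $a$ of order $2n$ rules out the EPPO property --- this is consistent with the paper's own treatment of $Q_{12}$ inside the proof of Theorem \ref{line S*G graph}. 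The backward direction ($n=2^{k}$ gives a $2$-group, complete $\mathcal{S}(Q_{4n})$, empty $\mathcal{S}^{**}(Q_{4n})$) matches the paper. In short, the paper's citation-only proof is silently incomplete in the non-dominatable (odd $n$) case, and your route, at the modest cost of the extra parity analysis, yields a proof of the corollary as actually stated.
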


\begin{proof}
     Since $Q_{4n}$ contains an element of order $4$, therefore, by Theorem \ref{line S**G graph}, $\mathcal{S^{**}}(Q_{4n})$ is a line graph if and only if $n = 2^k$.
\end{proof}

\begin{corollary}{\label{cor line SD8n S**G graph}}
    Let $G$ be the semidihedral group $SD_{8n}$. Then $\mathcal{S}^{**}(SD_{8n})$ is a line graph of some graph $\Gamma$ if and only if $n=2^k$ for some $k\geq 2$.
\end{corollary}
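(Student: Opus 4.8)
The plan is to obtain this as a direct consequence of Theorem \ref{line S**G graph}, in exactly the same spirit as Corollary \ref{cor line Q4n S**G graph}. First I would check that the standing hypothesis of Theorem \ref{line S**G graph} is satisfied, namely that $\mathcal{S}(SD_{8n})$ is dominatable. Every power $a^i$ has order dividing $4n$, and from the defining relation $ab=ba^{2n-1}$ a short computation gives $(a^i b)^2=a^{2ni}$, so each reflection $a^i b$ has order $2$ or $4$; hence $\mathrm{exp}(SD_{8n})=4n=o(a)$ and Lemma \ref{dominatable} yields that $\mathcal{S}(SD_{8n})$ is dominatable.

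The decisive observation is that $SD_{8n}$ always possesses an element of order $4$. Indeed $a^n$ has order $4n/\gcd(n,4n)=4$; equivalently, any reflection $a^i b$ with $i$ odd squares to the involution $a^{2n}$ and is therefore of order $4$. Since $4$ is not square-free, condition (ii) of Theorem \ref{line S**G graph} can never hold for $SD_{8n}$. Consequently, by that theorem, $\mathcal{S}^{**}(SD_{8n})$ is a line graph if and only if condition (i) holds, i.e.\ if and only if $SD_{8n}$ is a $p$-group. Because $|SD_{8n}|=8n$, this is possible only for the prime $2$, and $SD_{8n}$ is a $2$-group precisely when $8n$ is a power of $2$, that is, when $n=2^{k}$; the bound $k\geq 2$ reflects the range of $n$ for which $SD_{8n}$ is taken to be a (non-degenerate) semidihedral group.

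The only computational input is the order of the reflections and of $a^{n}$, which follows immediately from $ab=ba^{2n-1}$, so I anticipate no serious obstacle; the whole argument is a bookkeeping reduction to the two alternatives of Theorem \ref{line S**G graph}. The one point I would double-check is the admissible range of $k$, since the square-free alternative (ii) is automatically excluded here and the classification collapses to the single criterion that $SD_{8n}$ be a $2$-group.
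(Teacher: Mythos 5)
Your proof is correct and follows the same route as the paper: reduce to Theorem \ref{line S**G graph} by exhibiting an element whose order is not square-free, so that alternative (ii) is impossible and the question collapses to whether $SD_{8n}$ is a $2$-group. In fact, your version is more careful than the paper's one-line proof on two counts. First, the paper rules out alternative (ii) by asserting that $SD_{8n}$ contains an element of order $8$; this is false when $n$ is odd, since by your computation $(a^ib)^2=a^{2ni}$ every element outside $\langle a\rangle$ has order at most $4$, and $8\nmid 4n$ for odd $n$. Your witness $a^n$ of order $4$ exists for every $n$, so your argument covers all cases uniformly where the paper's literally does not. Second, you verify the standing hypothesis of Theorem \ref{line S**G graph} --- that $\mathcal{S}(SD_{8n})$ is dominatable, via $\mathrm{exp}(SD_{8n})=4n=o(a)$ and Lemma \ref{dominatable} --- which the paper silently omits. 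The one loose end, common to both proofs, is the bound $k\geq 2$ in the statement: both arguments really yield ``line graph if and only if $n$ is a power of $2$,'' and neither explains why $k=1$ (i.e.\ $SD_{16}$, a $2$-group whose $\mathcal{S}^{**}$ is empty and hence trivially a line graph) is excluded, other than by a convention on the admissible range of $n$; your closing remark flags exactly this point, which is the right thing to double-check.
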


    \begin{proof}
          Since $SD_{8n}$ contains an element of order $8$, therefore, by Theorem \ref{line S**G graph}, $\mathcal{S^{**}}(SD_{8n})$ is a line graph if and only if $n = 2^k$.
         
    \end{proof}

 \begin{theorem}{\label{line complement SG graph}}
    Let $G$ be a finite group and let $\Delta(G)\in \{\sg, \  \ss, \  \sss \}$. Then $\Delta(G)$ is the complement of the line graph if and only if either $G\cong \mathbb{Z}_6$ or $G$ is a $p$-group.
\end{theorem}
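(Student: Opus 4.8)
The plan is to apply Lemma \ref{complement induced lemma} while tracking only a single forbidden subgraph, namely $\overline{\Gamma_1}=\overline{K_{1,3}}=K_3\cup K_1$: a triangle together with a fourth vertex adjacent to none of its three vertices. The structural shortcut I would exploit is that $\sss$ is an induced subgraph of $\ss$, which in turn is an induced subgraph of $\sg$, and that the class of complements of line graphs is closed under induced subgraphs (immediate from the forbidden-subgraph list in Lemma \ref{complement induced lemma}). Consequently, to prove the $(\Rightarrow)$ direction it is enough to locate an induced $K_3\cup K_1$ inside $\sss$, since such a subgraph then appears in all three graphs simultaneously. For the $(\Leftarrow)$ direction I would instead verify each of $\sg,\ss,\sss$ directly; this turns out to be short.

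For $(\Leftarrow)$, if $G$ is a $p$-group then $\sg$ is complete by Theorem \ref{complete p group}, so $\sg=K_{|G|}$, $\ss=K_{|G|-1}$ and $\sss$ is empty; a complete graph has no non-edge and the empty graph has no vertex, so neither can contain any $\overline{\Gamma_i}$ (each $\overline{\Gamma_i}$ has a non-edge), and Lemma \ref{complement induced lemma} applies. If $G\cong\mathbb Z_6$ I would read off the order partition (besides $e$: one element of order $2$, two of order $3$, two of order $6$) and check that the only non-adjacencies of $\sg(\mathbb Z_6)$ are between the order-$2$ element and the two order-$3$ elements. Hence $\overline{\sg(\mathbb Z_6)}\cong P_3\cup 3K_1$, $\overline{\ss(\mathbb Z_6)}\cong P_3\cup 2K_1$ and $\overline{\sss(\mathbb Z_6)}\cong P_3$, each of which is a line graph (e.g. $P_3\cup mK_1=L(P_4\cup mK_2)$); so each $\Delta(\mathbb Z_6)$ is the complement of a line graph.

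For $(\Rightarrow)$ I argue by contraposition: assume $G$ is neither a $p$-group nor $\mathbb Z_6$ and produce an induced $K_3\cup K_1$ in $\sss$. Two elements whose orders are distinct primes $p\neq q$ are never adjacent in $\sg$, and no prime-order element can have order $\mathrm{exp}(G)$ (which is divisible by at least two primes), so by Lemma \ref{dominatable} all such elements survive in $\sss$. Thus, if some prime $r$ dividing $|G|$ admits at least three elements of order $r$, those three form a triangle and any element of a different prime order completes an induced $K_3\cup K_1$. The remaining case is that every prime order occurs at most twice; using that the number of elements of order $r$ is $\equiv -1 \pmod r$, hence at least $r-1$, this forces the prime divisors of $|G|$ to be exactly $2$ and $3$, with a single involution $t$ and exactly two elements of order $3$. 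Since $G\not\cong\mathbb Z_6$, either $4\mid |G|$ or $9\mid |G|$. If $4\mid |G|$, a single involution excludes an elementary abelian Sylow $2$-subgroup, forcing an element $u$ of order $4$; then $\{t,u,u^{-1}\}$ (orders $2,4,4$) is a triangle and the order-$3$ element is adjacent to none of them. If instead $9\mid |G|$, having only two elements of order $3$ excludes an elementary abelian Sylow $3$-subgroup, forcing an element $w$ of order $9$; then $\{s,w,w^{-1}\}$ (orders $3,9,9$) is a triangle and the involution $t$ is adjacent to none. In both cases all four vertices have order different from $1$ and from $\mathrm{exp}(G)$, so they lie in $\sss$, yielding the required $K_3\cup K_1$.

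The main obstacle is exactly this last case: once every prime order occurs at most twice, prime-order elements alone cannot build a triangle, and one must descend to the ``second layer'' of the order structure (elements of order $4$ or $9$) and argue that they exist. I expect the cleanest justification to be the elementary fact that a $p$-group in which every non-identity element has order $p$ is elementary abelian, and hence has $p^{a}-1\ge 3$ elements of order $p$ when $p^2\mid|G|$, directly contradicting the ``at most twice'' hypothesis and so guaranteeing the order-$4$ (respectively order-$9$) element needed to finish.
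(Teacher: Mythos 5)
Your proof is correct, and it rests on the same key tool as the paper's: Lemma \ref{complement induced lemma} applied with the single forbidden graph $\overline{\Gamma_1}=K_3\cup K_1$, realized as a triangle of elements with pairwise comparable orders plus one element of coprime order; your sufficiency direction (complete/empty graphs for $p$-groups, explicit check for $\mathbb{Z}_6$) is also essentially the paper's. Where you genuinely diverge is the case decomposition for necessity. The paper splits on the size of the primes dividing $|G|$: if some prime $p>3$ divides $|G|$, there are at least $p-1\geq 4$ elements of order $p$, three of which give the triangle; otherwise $|G|=2^{\alpha}3^{\beta}$, and if (say) $\beta\geq 2$ one takes \emph{any} three non-identity elements of the Sylow $3$-subgroup --- they are automatically pairwise adjacent because any two $3$-power orders are comparable under divisibility, with no need to know the exact orders --- leaving only $|G|=6$, where $S_3$ is excluded by its three involutions. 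Your split, on whether some prime $r$ admits at least three elements of order $r$, forces you in the residual case to establish much finer structure (a unique involution, exactly two elements of order $3$, and then an element of order $4$ or $9$ via exclusion of elementary abelian Sylow subgroups). That argument is sound, but it is more work than the Sylow-subgroup observation requires: e.g.\ when $9\mid |G|$ you hunt for an element of order $9$, whereas any three non-identity elements of the Sylow $3$-subgroup would do. On the other hand, your write-up is more careful than the paper's on one genuine point: by noting that complements of line graphs form an induced-hereditary class and that $\sss$ sits inside $\ss$ inside $\sg$ as induced subgraphs, you reduce all three cases of $\Delta(G)$ to finding $K_3\cup K_1$ in $\sss$, and you verify via Lemma \ref{dominatable} that your four vertices are non-dominating and hence actually lie in $\sss$ --- a detail the paper's proof (which works with $\Delta(G)$ generically) leaves implicit.
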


\begin{proof}
Let $\Delta(G) $ be the complement of the line graph of some graph $\Gamma$. If $G$ is a $p$-group, then the result holds. Suppose $G$ is not a $ p$-group. Then we have the following cases.

\noindent\textbf{Case-1:} \emph{$p\vert o(G)$, where $p > 3$ is a prime}. In this case, we have at least four elements of order $p$ in $G$. Let $x, y, z \in G$ such that $ o(x)=o(y)=o(z) = p$. Since $G$ is not a $p$-group. Consequently, it is divisible  by an another prime $q$. Consider $w \in G$ such that $o(w) = q$. The subgraph of $\Delta(G)$ induced by the set $\{x,y,z,w\}$ is isomorphic to $\overline{\Gamma_1}$, which is a contradiction (see Figure \ref{fig complement_line_graph}).

\noindent\textbf{Case-2:} $o(G) = 2^{\alpha}3^{\beta}$ \emph{for some} $\alpha,\beta \in \mathbb{N}$. Without loss of generality, assume that $\beta \geq 2$. Let $H$ be the Sylow subgroup of $G$ such that $o(H) =  3^{\beta}$. Consider three non-identity elements $x,y,z$ of $H$. Let $w \in G$  such that $o(w) = 2$. Then the subgraph of $\Delta(G)$ induced by the set $\{x,y,z,w\}$ is isomorphic to $\overline{\Gamma_{1}}$, a contradiction. 
Thus, $o(G) = 6 $. If $G \cong {S}_3$, then it has an induced subgraph which is isomorphic to $\overline{\Gamma_1}$.

  \begin{figure}[ht]
    \centering
    \includegraphics[scale=.9]{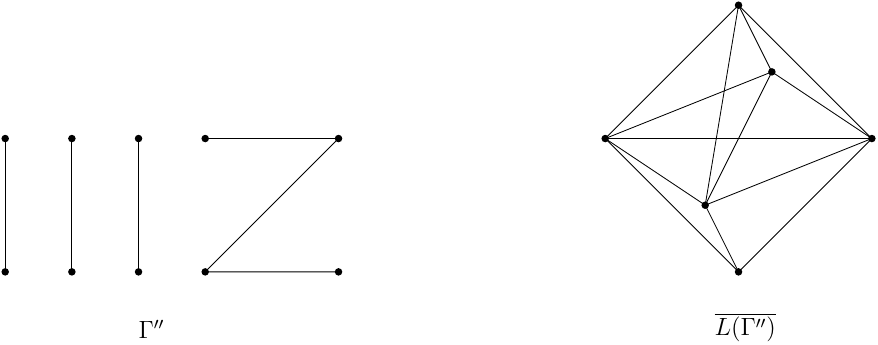}
    \caption{The graph $\Gamma''$ and the complement of the line graph of $\Gamma''$.}
    \label{fig Z6complement}
\end{figure} 
Conversely, if $G$ is a $ p$-group, then $\sg$  and $\ss$ are complete  graph, whereas $\sss$ is the empty graph (cf. Theorem \ref{complete p group}). Thus, $\sg = \overline{L(nK_{2})}$ and $\ss = \overline{L((n-1)K_{2})}$. If $G \cong \mathbb{Z}_6$, then $\mathcal{S}(\mathbb{Z}_6)$ is isomorphic to the complement of the line graph of a graph $\Gamma''$ (see Figure \ref{fig Z6complement}). Since $\ss$ and $\sss$ are induced subgraphs of $\sg$. Consequently, being induced subgraphs $\ss$ and $\sss$ are also the complement of the line graph of the induced subgraphs of $\Gamma^{''}$ (cf. Figure \ref{fig Z6complement}). This completes the proof.
\end{proof}



\section*{Declarations}

\textbf{Funding}: The first author and the third author wish to acknowledge the support of Core Research Grant (CRG/2022/001142) funded by  SERB, Government of India. The second author gratefully acknowledges for providing financial support to CSIR  (09/719(0110)/2019-EMR-I), Government of India.  

\vspace{.3cm}
\textbf{Conflicts of interest/Competing interests}: There is no conflict of interest regarding the publishing of this paper. 

\vspace{.3cm}
\textbf{Availability of data and material (data transparency)}: Not applicable.

\vspace{.3cm}
\textbf{Code availability (software application or custom code)}: Not applicable.



\vspace{1cm}
\noindent
{\bf Manisha\textsuperscript{\normalfont 1}, {\bf Parveen\textsuperscript{\normalfont 1}, \bf Jitender Kumar\textsuperscript{\normalfont 1}}
\bigskip

\noindent{\bf Addresses}:


\end{document}